\newtheorem{theorem}{Theorem}[section]
\newtheorem{lemma}[theorem]{Lemma}
\newtheorem{Prop}[theorem]{Proposition}
\theoremstyle{definition}
\newtheorem{definition}[theorem]{Definition}
\theoremstyle{remark}
\newtheorem{remark}[theorem]{Remark}
\numberwithin{equation}{section}
\newcommand*{\rom}[1]{\expandafter\@slowromancap\romannumeral #1@}
\newcommand{\R}{\mathbb{R}}
\newcommand{\Z}{\mathbb{Z}}
\newcommand{\Ric}{\operatorname{Ric}}
\newcommand{\Rm}{\operatorname{Rm}}
\newcommand{\Scal}{\operatorname{R}}
\newcommand{\tr}{\operatorname{tr}}
\newcommand{\grad}{\operatorname{grad}}
\newcommand{\signum}{\operatorname{sign}}
\newcommand{\schild}{Schwarz\-schild }
\newcommand{\mylap}[1]{{}^{#1}\!\triangle}
\newcommand{\my}[2]{{}^{#1}{#2}}
\newcommand{\free}[1]{\accentset{\,\circ}{#1}}
\newcommand{\photo}{P^{3}}
\newcommand{\Sphoto}{\overline{P}^{3}}  
\newcommand{\slice}{M^{3}}
\newcommand{\surf}{\Sigma^{2}}
\begin{document}
\selectlanguage{english}
\title[Uniqueness of photon spheres in static vacuum as.\ flat spacetimes]{Uniqueness of photon spheres in static vacuum asymptotically flat spacetimes}

\author{Carla Cederbaum}
\address{Mathematics Department, Universit\"at T\"ubingen, Germany}
\email{cederbaum@math.uni-tuebingen.de}
\thanks{The author was supported by the Robert Bosch Foundation. This material is based upon work supported by the National Science Foundation under Grant No.\,0932078 000, while the author was in residence at the Mathematical Sciences Research Institute in Berkeley, California, during the fall semester 2013.}

\subjclass[2010]{Primary 35Q75, Secondary 83C15, 83C20, 35H99, 53A99, 53Z05}

\date{}
\keywords{General relativity, null geodesics, photon sphere, static spacetimes}

\begin{abstract}
Adapting Israel's proof of static black hole uniqueness \cite{Israel}, we show that the Schwarzschild spacetime is the only static vacuum asymptotically flat spacetime that possesses a suitably defined photon sphere. 
\end{abstract}

\maketitle

\section{Introduction}\label{sec:intro}
The static spherically symmetric \schild black hole spacetime\footnote{The same formula still defines a \emph{\schild spacetime} if $m<0$. The corresponding metric is well-defined on $\overline{\mathfrak{L}}^{4}=\R\times(\R^{3}\setminus\lbrace0\rbrace)$ but possesses neither a black hole horizon nor a photon sphere. If $m=0$, the Schwarzschild spacetime degenerates to the Minkowski spacetime.} of mass $m>0$ can be represented as $(\overline{\mathfrak{L}}^{4}:=\R\times(\R^{3}\setminus B_{2m}(0)),\overline{\mathfrak{g}})$, where the Lorentzian metric $\overline{\mathfrak{g}}$ is given by
\begin{align}\label{schwarzmetric}
\overline{\mathfrak{g}}&=-\overline{N}^{2}dt^{2}+\overline{N}^{-2}dr^{2}+r^{2}\Omega,\quad 
\overline{N}=\left(1-\frac{2m}{r}\right)^{1/2},
\end{align}
with $\Omega$ denoting the canonical metric on $\mathbb{S}^{2}$. In these coordinates, the black hole horizon is given by the cylinder $\R\times\mathbb{S}^{2}_{2m}=\lbrace r=2m\rbrace$. The timelike submanifold $\Sphoto:=\R\times\mathbb{S}^{2}_{3m}=\lbrace r=3m\rbrace$ is called a \emph{photon sphere} because any null geodesic of $(\overline{\mathfrak{L}}^{4},\overline{\mathfrak{g}})$ that is initially tangent to $\Sphoto$ remains tangent to it. The \schild photon sphere thus models (an embedded submanifold ruled by) photons spiraling around the central black hole ``at a fixed distance''. 

Apart from its phe\-no\-me\-nological significance for general relativity, the \schild photon sphere and its generalized analog in the Kerr spacetime are crucially relevant for questions of dynamical stability in the context of the Einstein equations, see e.\,g.\ \cite{DR}. It thus seems useful to understand photon spheres in more generality\footnote{Clearly, from the perspective of stability, it will be necessary to understand the existence of (generalized) photon spheres in dynamical or at least stationary, not only in static spacetimes.}.

Photon spheres have also been studied in the context of gravitational lensing. There, they are related to the existence of relativistic images as was demonstrated by Virbhadra and Ellis \cite{VE1,VE2} in the context of static spherically symmetric spacetimes. Building upon this work, Claudel, Virbhadra and Ellis \cite{CVE} gave a geometric definition of photon spheres, again for static spherically symmetric spacetimes.

To the best knowledge of the author, it is unknown whether more general spacetimes can possess (generalized) photon spheres, see p.\;838 of \cite{CVE}. We will address this question for asymptotically flat static vacuum or \emph{AF-geometrostatic} spacetimes.

In Section \ref{sec:setup and definition}, we will give a geometric definition of photon spheres in AF-geometrostatic spacetimes. We will explain how our definition generalizes the one given in \cite{CVE}. Our definition of photon spheres is related to constancy of the energy of the null geodesics generating the photon sphere (as observed by the static observers in the spacetime), see Lemma \ref{lem:energy}. In Section \ref{sec:proof}, we will prove that the only AF-geometrostatic spacetime admitting a photon sphere is the Schwarzschild spacetime:
\begin{theorem}
Let $(\mathfrak{L}^4,\mathfrak{g})$ be an AF-geometrostatic spacetime possessing a photon sphere $\photo\hookrightarrow\mathfrak{L}^4$  with mean curvature $\mathfrak{H}$, arising as the inner boundary of $\mathfrak{L}^{4}$. Assume that the lapse function $N$ regularly foliates $\mathfrak{L}^4$. Then $\mathfrak{H}\equiv\text{const}$ and $(\mathfrak{L}^4,\mathfrak{g})$ is isometric to the \schild spacetime of the same  mass $m=1/(\sqrt{3}\,\mathfrak{H})>0$.
\end{theorem}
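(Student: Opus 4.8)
The plan is to follow the structure of Israel's classical static black hole uniqueness theorem, adapting it from the black hole horizon setting to the photon sphere setting. Since the spacetime is static, I would first reduce the problem to the Riemannian data: write $\mathfrak{g}=-N^2\,dt^2+g$ on $\mathfrak{L}^4=\R\times\slice$, where $(\slice,g,N)$ is the canonical time slice with lapse $N$. The static vacuum equations then read as the coupled system $N\,\Ric=\nabla^2 N$ (Hessian) and $\mylap{}N=0$ on $\slice$, with $\Scal\equiv0$. The first task is to understand what the photon sphere hypothesis means on the slice: the timelike hypersurface $\photo=\R\times\surf$ meets each slice in a surface $\surf\hookrightarrow\slice$, and I would translate the geodesic-trapping condition (together with Lemma~\ref{lem:energy} on constancy of energy) into geometric constraints on $\surf$, namely that $\surf$ is totally umbilic in $(\slice,g)$ with constant mean curvature, that $N$ is constant on $\surf$, and that the normal derivative of $N$ is also constant on $\surf$. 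These constancy statements are exactly the analogs of the equipotential, constant-surface-gravity conditions Israel exploits at the horizon.

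Next I would set up Israel's level-set foliation. The hypothesis that $N$ regularly foliates $\mathfrak{L}^4$ (equivalently $\slice$) means $\lvert\nabla N\rvert\neq0$ away from infinity, so the level sets $\lbrace N=\text{const}\rbrace$ give a smooth foliation with $\surf$ as the inner leaf. I would introduce $N$ and $\rho:=\lvert\nabla N\rvert$ as adapted coordinate functions, write the metric in the Israel form $g=\rho^{-2}\,dN^2+\mygam{}_{ab}\,dx^a dx^b$ on the leaves, and rewrite the static vacuum equations as evolution and constraint equations along the foliation. The key quantities are the mean curvature of the leaves, the surface gravity $\rho$, and the induced two-metrics. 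Using $\Scal\equiv0$ and the Hessian equation, one derives the crucial differential inequalities/identities governing how $\int_{\lbrace N=c\rbrace}\rho$ and related geometric integrals evolve in $N$.

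The heart of the argument is the Israel-type integral identity together with a monotonicity/divergence argument. I would integrate a cleverly chosen divergence expression (built from $\rho$, the mean curvature $H$ of the leaves, and the Gauss curvature of the leaves) over the region between $\surf$ and spatial infinity. On the inner boundary $\surf$, the photon sphere conditions (umbilicity, constancy of $N$ and of $\rho$) supply the needed boundary values; at infinity, asymptotic flatness supplies the ADM-mass normalization via the expansion of $N$ and $\rho$. Invoking the Gauss--Bonnet theorem on each leaf (which forces each leaf to be topologically $\mathbb{S}^2$), the identity yields an inequality that is saturated only when every leaf is totally umbilic, $\rho$ is constant on each leaf, and the leaves are round spheres. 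Rigidity in this inequality then forces spherical symmetry, and integrating the reduced ODEs recovers precisely the \schild metric \eqref{schwarzmetric}.

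I expect the main obstacle to be twofold. First, correctly translating the dynamical photon sphere definition into the static boundary conditions on $\surf$ and verifying that these match exactly the data Israel needs at a horizon — in particular, Israel works at the horizon where $N=0$, whereas here $N$ is a positive constant on $\surf$ and $\rho\neq0$, so the boundary terms in the integral identity differ and must be handled with care using the relation $m=1/(\sqrt{3}\,\mathfrak{H})$ to get the constants right. Second, establishing the required regularity and sign of the foliation all the way to infinity (that the level sets stay smooth spheres and that $\rho>0$ throughout), which is where the ``$N$ regularly foliates'' hypothesis is essential; without it the divergence argument could break down at critical points of $N$. Getting the boundary contribution at $\surf$ to combine with the asymptotic term so that the Gauss--Bonnet-driven inequality is sharp is the decisive computational step.
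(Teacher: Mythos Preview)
Your plan is correct and mirrors the paper's proof: reduce to the Riemannian slice, extract the umbilic/CMC/constant-$N$/constant-$\nu(N)$ data on $\surf$ from the photon sphere condition, run Israel's level-set integral identities between $\surf$ and infinity, and use Gau{\ss}--Bonnet plus the asymptotics to force equality and hence spherical symmetry. One small caveat: the relation $m=1/(\sqrt{3}\,\mathfrak{H})$ is not an input you may ``use to get the constants right'' but is \emph{derived} along the way---from the photon-sphere-specific identities $\mathfrak{H}\,r_0=\pm\sqrt{3}$ and $N_0=m\,\mathfrak{H}$ (obtained via Gau{\ss}--Bonnet on $\surf$ and the Gau{\ss} equation for $\surf\hookrightarrow\photo$)---and it is exactly these identities that replace Israel's horizon boundary condition $N_0=0$ when you evaluate the inner boundary terms.
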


For this, we adapt Israel's proof of uniqueness of black holes in asymptotically flat static vacuum spacetimes \cite{Israel} (as exposed in Heusler \cite{Heusler}). As Israel, we will have to assume that the lapse function regularly foliates the spacetime (at least in the region exterior to the photon sphere). This is automatically true in a neighborhood of the asymptotic (spacelike) infinity if the ADM-mass of the spacetime is non-zero, s.\ Lemma \ref{lem:foli}. Moreover, the assumption holds true in the \schild spacetime as well as in most known AF-geometrostatic spacetimes. In particular, this assumption restricts our attention to \emph{connected} photon spheres that are indeed \emph{topological spheres}. 

Israel's result has been generalized in other directions, for example by Bunting and Masood-ul-Alam \cite{BMuA} and by Miao \cite{MiaoUnique}. They generalized Israel's static vacuum black hole uniqueness theorem, removing the technical condition of the lapse function foliating the spacetime outside the horizon, and thus in particular allowing non-spherical and disconnected  horizons a priori. Following the Bunting and Masood-ul-Alam approach, a priori disconnected and not necessarily spherical photon ``spheres'' will be addressed in \cite{cedergal}, together with other results on photon surfaces.

The author would like to thank Gregory Galloway and Gerhard Huisken for helpful discussions.
\vfill

\section{Setup and definitions}\label{sec:setup and definition}
Let us first quickly review the definition of and some facts about asymptotically flat static vacuum spacetimes. These model exterior regions of static configurations of stars or black holes. See Bartnik \cite{Bartnik} for a more detailed account of asymptotically flat Riemannian manifolds and harmonic coordinates as well as for the definition of the weighted Sobolev spaces $W^{k,p}_{-\tau}(E)$ we will use in the following. More details and facts on asymptotically flat static vacuum spacetimes can be found in \cite{CDiss}.

\begin{definition}[AF-geometrostatic spacetimes and systems]\label{def:AFSVS}
A smooth Lorent\-zian manifold or \emph{spacetime} $(\mathfrak{L}^{4},\mathfrak{g})$ is called \emph{(standard) static} if there is a smooth Riemannian manifold $(\slice,g)$ and a smooth \emph{lapse} function $N:\slice\to\R^{+}$ s.\,t.\
\begin{align}\label{static}
\mathfrak{L}^{4}&=\R\times \slice,\quad \mathfrak{g}=-N^{2}dt^{2}+g,
\end{align}
and \emph{vacuum} if it satisfies the Einstein vacuum equation
\begin{align}\label{EE}
\mathfrak{Ric}&=0,
\end{align}
where $\mathfrak{Ric}$ denotes the Ricci curvature tensor of $(\mathfrak{L}^{4},\mathfrak{g})$. We will sometimes call $\slice$ a \emph{(time-)slice} of $\mathfrak{L}^{4}$, as it arises as $\slice=\lbrace t=0\rbrace$, where $t$ is considered as the time variable of the spacetime. A static spacetime is called \emph{asymptotically flat} if the manifold $\slice$ is diffeomorphic to the union of a (possibly empty) compact set and an open \emph{end} $E^{3}$ which is diffeomorphic to $\R^{3}\setminus \overline{B}$, where $B$ is the open unit ball in $\R^{3}$. Furthermore, we require that the lapse function $N$, the metric $g$, and the coordinate diffeomorphism $\Phi=(x^{i}):E^{3}\to\R^{3}\setminus \overline{B}$ combine s.\,t.\
\begin{align}\label{AF}
g_{ij}-\delta_{ij}&\in W^{k,2}_{-\tau}(E)\\\label{NAF}
N-1\;\;&\in W^{k+1,2}_{-\tau}(E)
\end{align}
for some $\tau>1/2$, $\tau\notin\Z$, $k\geq3$ and that $\Phi_*g$ is uniformly positive definite and uniformly continuous on $\R^{3}\setminus \overline{B}$. Here, $\delta$ denotes the Euclidean metric on $\R^{3}$. For brevity, smooth\footnote{M\"uller zum Hagen \cite{MzH} showed that static spacetimes with $g_{ij}, N\in C^3$ are automatically real analytic with respect to wave-harmonic coordinates if they solve \eqref{EE}, see also Footnote 5.} asymptotically flat maximally extended\footnote{i.\,e.\ geodesically complete up to a possible inner boundary} static vacuum spacetimes will be referred to as \emph{AF-geometrostatic spacetimes}, the associated triples $(\slice,g,N)$ will be called \emph{AF-geometrostatic systems}. We will frequently use the radial coordinate $r:=\sqrt{(x^{1})^{2}+(x^{2})^{2}+(x^{3})^{2}}$ corresponding to the chosen coordinates $(x^{i})$.
\end{definition}

Exploiting \eqref{static}, the Einstein vacuum equation \eqref{EE} reduces to\begin{align}\label{SMEvac1}
N\,{\Ric}&={\nabla}^2 N\\\label{SMEvac2}
\Scal&=0
\end{align}
on $\slice$, where $\nabla^{2}$, $\Ric$, and $\Scal$ denote the covariant Hessian, the Ricci, and the scalar curvature of the metric $g$, respectively. Combining \eqref{SMEvac1} and \eqref{SMEvac2}, one obtains
\begin{align}\label{SMEvac3}
\triangle N&=0
\end{align}
on $\slice$, where $\triangle$ denotes the Laplacian with respect to $g$. The \emph{static metric equations} \eqref{SMEvac1}, \eqref{SMEvac3} are a system of degenerate elliptic quasi-linear second order PDEs in the variables $N$ and $g_{ij}$ (with respect to for example $g$-harmonic coordinates). Translating a result by Kennefick and \'O Murchadha \cite{KM} to our notation, we find that AF-geometrostatic systems are automatically \emph{asymptotically Schwarzschildean}: 

\begin{theorem}[Kennefick \& \'O Murchadha]\label{thm:KM}
Let $(\slice,g,N)$ be an AF-geo\-metro\-static system as in Definition \ref{def:AFSVS} with an end $E^{3}$, $\tau>1/2$, $\tau\notin\Z$, and $k\geq3$. Then $(\slice,g,N)$ must be \emph{asymptotically Schwarzschildean}, i.\,e.\ be such that $g, N$ satisfy
\begin{align}\label{eq:KM1}
g_{ij}-\overline{g}_{ij}&\in W^{k,2}_{-(\tau+1)}(E)\\\label{eq:KM2}
N-\overline{N}\;\,&\in W^{k+1,2}_{-(\tau+1)}(E)
\end{align}
with respect to $\mathfrak{g}$-wave harmonic coordinates\footnote{In \cite{KM}, the condition on the coordinates is that they should be $\gamma$-harmonic with respect to the conformally transformed metric $\gamma:=N^2 g$. This is equivalent to them being $\mathfrak{g}$-wave harmonic, see e.\,g.\ Lemma 3.1.3 in \cite{CDiss}.} with respect to the associated spacetime metric $\mathfrak{g}$ defined by \eqref{static}. Here, $\overline{N}$ and $\overline{g}$ are the lapse function and the Riemannian metric corresponding to the \schild metric\footnote{or $N\equiv1$ and the Euclidean metric if $m=0$.} \eqref{schwarzmetric} of some mass parameter $m\in\R$.
\end{theorem}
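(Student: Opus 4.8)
The plan is to re-derive the improved fall-off \eqref{eq:KM1}, \eqref{eq:KM2} as a decay-improvement (bootstrap) argument for the static vacuum system \eqref{SMEvac1}, \eqref{SMEvac3}, following Kennefick and \'O Murchadha \cite{KM}. First I would pass to $\mathfrak{g}$-wave harmonic coordinates, equivalently to $\gamma$-harmonic coordinates for the conformal metric $\gamma := N^{2}g$ (s.\ the footnote to the statement and \cite{CDiss}). In such coordinates the Ricci tensor of $g$ reads $\Ric_{ij} = -\tfrac{1}{2}g^{ab}\partial_{a}\partial_{b}g_{ij} + Q_{ij}(g,\partial g)$ with $Q_{ij}$ quadratic in $\partial g$, so that --- writing $g_{ij} = \delta_{ij}+h_{ij}$, $N = 1+u$ with $h_{ij}\in W^{k,2}_{-\tau}(E)$ and $u\in W^{k+1,2}_{-\tau}(E)$ by \eqref{AF}, \eqref{NAF} --- the equations \eqref{SMEvac1} and \eqref{SMEvac3} take the schematic elliptic form $\triangle_{\delta}h_{ij} = F_{ij}$ and $\triangle_{\delta}u = G$. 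Here $\triangle_{\delta}$ is the Euclidean Laplacian and the sources $F_{ij}, G$ collect the curvature corrections $(\delta^{ab}-g^{ab})\partial_{a}\partial_{b}(\cdot)$, the Hessian term $\nabla^{2}N$ from \eqref{SMEvac1}, and terms at least quadratic in $(h,u,\partial h,\partial u)$; each is a product of individually decaying factors and hence lies in a weighted space with decay improved relative to $h_{ij}, u$ themselves.

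Next I would extract the leading-order asymptotics. By the asymptotic expansion theory for the flat Laplacian on weighted Sobolev spaces (s.\ Bartnik \cite{Bartnik}), whenever the source $f$ decays fast enough a solution $w$ of $\triangle_{\delta}w = f$ differs from a harmonic monopole $a\,r^{-1}$ by a remainder lying in the weighted space with decay improved by one power of $r$; the coefficient $a$ is fixed by the total flux and hence by the ADM mass. Applied to $u$, whose governing equation \eqref{SMEvac3} is the flat Laplace equation up to fast-decaying corrections, this gives $u = -m\,r^{-1} + O(r^{-(\tau+1)})$, defining the mass parameter $m$ and reproducing the leading term of the \schild lapse $\overline{N}$ in \eqref{schwarzmetric}. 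Feeding $u$ and the scalar-flat condition \eqref{SMEvac2} into \eqref{SMEvac1}, the Hessian $\nabla^{2}N$ is $O(r^{-3})$, so the same expansion applied componentwise to $h_{ij}$ recovers the $r^{-1}$ correction of $\overline{g}_{ij}$ with the same mass $m$.

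Finally I would close the bootstrap. Substituting the leading expansions back into $F_{ij}, G$, every nonlinear contribution is now a product of factors decaying at least like $r^{-1}$, so the sources gain (almost) a further power of $r$; re-applying the weighted elliptic estimate and expansion theorem yields $N-\overline{N}\in W^{k+1,2}_{-(\tau+1)}(E)$ and $g_{ij}-\overline{g}_{ij}\in W^{k,2}_{-(\tau+1)}(E)$, which are exactly \eqref{eq:KM2} and \eqref{eq:KM1}. The hard part will be twofold: carrying the decay-improvement uniformly up to the top derivative order $k$ within the weighted Sobolev scale, which requires the full regularity-and-expansion theorem for $\triangle_{\delta}$ together with careful bookkeeping of the quadratic sources; and a gauge comparison showing that the extracted monopole coefficients are genuinely those of the \schild fields $\overline{g}, \overline{N}$ of mass $m$, written in the corresponding $\mathfrak{g}$-wave harmonic coordinates, rather than some other harmonic profile. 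The hypotheses $\tau>1/2$ and $\tau\notin\Z$ are precisely what rule out logarithmic or resonant terms in the expansion and single out $r^{-1}$ as the unique leading correction.
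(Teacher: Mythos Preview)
The paper does not supply its own proof of this theorem; it is quoted from Kennefick and \'O Murchadha \cite{KM} (with the coordinate condition translated via \cite{CDiss}) and used as a black box. Your sketch --- passing to $\mathfrak{g}$-wave harmonic (equivalently $\gamma$-harmonic) coordinates, rewriting the static vacuum equations as $\triangle_{\delta}$ acting on the perturbations plus faster-decaying sources, extracting the $r^{-1}$ monopole via Bartnik's weighted expansion theorem, and bootstrapping --- is exactly the standard decay-improvement argument underlying \cite{KM}, so there is nothing to compare against in the present paper and your outline is consistent with the cited source.
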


\begin{remark}
A simple computation shows that the parameter $m$ in Theorem \ref{thm:KM} equals the ADM-mass of the spacetime for the definition of which we refer the reader to Arnowitt, Deser, and Misner \cite{ADM}.
\end{remark}
In Theorem \ref{thm:main}, we assume that the AF-geometrostatic spacetimes under consideration are foliated by the level sets of the lapse function $N$ (outside their respective photon spheres). This restricts the topology of $\slice$ (outside the photon sphere) to that of $\R^3\setminus \overline{B}$ and the topology of the level sets of $N$ in $(\mathfrak{L}^{4},\mathfrak{g})$ to $\R\times\mathbb{S}^{2}$. The Schwarzschild spacetime is clearly foliated in this way. Moreover, any AF-geometrostatic spacetime is foliated like this in a neighborhood of spatial infinity:

\begin{lemma}\label{lem:foli}
Let $(\slice,g,N)$ be an AF-geometrostatic system with non-vanishing ADM-mass $m$. Then there exists a compact interior $K\subset \slice$ such that $N$ foliates the slice $\slice\setminus K$ with spherical leaves and $\R\times(\slice\setminus K)\subset\mathfrak{L}^{4}$ with cylindrical leaves $\R\times\mathbb{S}^{2}$. 
\end{lemma}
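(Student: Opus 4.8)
The plan is to use Theorem \ref{thm:KM} to reduce the foliation statement to an explicit computation on the Schwarzschild background together with a decay estimate for the perturbation. First I would invoke the asymptotically Schwarzschildean behavior \eqref{eq:KM1}, \eqref{eq:KM2}: in the $\mathfrak{g}$-wave harmonic coordinates $(x^{i})$ on the end provided by Theorem \ref{thm:KM}, and with $r$ the associated radial coordinate, we have $N-\overline{N}\in W^{k+1,2}_{-(\tau+1)}(E)$ with $k\geq 3$. By the weighted Sobolev embedding theorem (see Bartnik \cite{Bartnik}), in dimension $3$ this yields control of the coordinate derivatives up to order $k-1\geq 2$, and in particular the pointwise estimate $|\partial_i(N-\overline{N})|=O(r^{-(\tau+2)})=o(r^{-2})$, where the last equality uses $\tau>1/2$, hence $\tau+2>2$.

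Next I would compute the radial derivative of the background lapse. From $\overline{N}=(1-2m/r)^{1/2}$ one finds $\partial_r\overline{N}=m/(r^{2}\overline{N})$, so that $\partial_r\overline{N}=m\,r^{-2}\bigl(1+o(1)\bigr)$ as $r\to\infty$; in particular it is nonzero and carries the fixed sign of $m$. Writing $\partial_r N=\partial_r\overline{N}+\partial_r(N-\overline{N})$ and recalling $\partial_r=(x^{i}/r)\,\partial_i$, the perturbation term is $o(r^{-2})$ by the estimate above. Since $m\neq 0$ by hypothesis, the background term $m\,r^{-2}$ dominates, so there is a radius $R_0$ such that $\partial_r N$ has the sign of $m$ — in particular does not vanish — throughout $\{r>R_0\}$. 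Setting $K:=\slice\setminus\{r>R_0\}$, which is compact as the complement in $\slice$ of a neighborhood of infinity of the end, this already shows that $N$ has no critical points on $\slice\setminus K$.

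The strict monotonicity of $N$ along each radial ray then upgrades this regular-value statement to a genuine foliation. Since $\partial_r N\neq 0$ on $\{r>R_0\}$, the implicit function theorem lets me trade the coordinate $r$ for $N$: the angular variable $\omega\in\mathbb{S}^{2}$ together with $N$ form coordinates on $\slice\setminus K$ and exhibit it as a product over $\mathbb{S}^{2}$. Consequently, for each value $c$ in the relevant range, the level set $\{N=c\}$ is a radial graph over $\mathbb{S}^{2}$, hence a topological sphere, and these leaves foliate $\slice\setminus K$. Finally, because $N$ is independent of the time coordinate $t$, the corresponding level sets in $\mathfrak{L}^{4}=\R\times\slice$ are the cylinders $\R\times\{N=c\}\cong\R\times\mathbb{S}^{2}$, foliating $\R\times(\slice\setminus K)$ as claimed.

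I expect the main obstacle to lie in the bookkeeping for the decay estimate — extracting a pointwise bound on $\partial_r(N-\overline{N})$ from the weighted Sobolev membership and confirming that it is strictly lower order than the background $m\,r^{-2}$ uniformly in the angular variables (and checking that the mild discrepancy between the harmonic coordinates of Theorem \ref{thm:KM} and the defining coordinates $(x^{i})$ is of higher order and harmless) — rather than in the topological conclusion, which follows formally once radial monotonicity of $N$ is established.
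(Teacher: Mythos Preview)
Your argument is correct and follows essentially the same route as the paper: both use Theorem~\ref{thm:KM} to see that the Schwarzschild contribution $\partial_i\overline{N}\sim m x_i/r^3$ dominates the $o(r^{-2})$ perturbation (so $dN\neq 0$ near infinity when $m\neq 0$), invoke the implicit function theorem to obtain the foliation, and then use the near-radial form of $N$ to identify the leaves as spheres. Your version is somewhat more detailed --- you spell out the Sobolev embedding and the radial-graph argument for the topology of the leaves --- but the strategy is the same.
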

\begin{proof}
Theorem \ref{thm:KM} tells us that $$\frac{\partial N}{\partial x^i}-\frac{mx_i}{r^3}\in W^{k,2}_{-\tau-2}(E)$$ holds in $\mathfrak{g}$-wave harmonic asymptotically flat coordinates in the end $E$ of $\slice$. Therefore, $dN\neq0$ holds in a neighborhood of infinity. By the implicit function theorem, $N$ thus locally foliates $\slice\setminus K$ for a suitably large compact interior $K$. The leaves of the foliation must be spherical as $N=1-m/r+\mathcal{O}(r^{-2})$ is radial up to second order again by Theorem \ref{thm:KM}.
\end{proof}
\vfill
\subsection{Definition of photon surfaces and photon spheres}
Let us now proceed to defining photon spheres in AF-geometrostatic spacetimes. First, let us quickly review the history of the definition of photon spheres in static spacetimes\footnote{In stationary non-static spacetimes, one cannot expect photon spheres to arise as embedded submanifolds of the spacetime as the (angular) momentum of the spacetime will affect photons orbiting in one way differently from those orbiting the other way, as is well-known for the Kerr spacetime, see e.\,g.\ O'Neill \cite{ONeill2}.}. In the context of static spherically symmetric spacetimes\footnote{not necessarily subject to Einstein's equation}, Virbhadra and Ellis \cite{VE1,VE2} defined photon spheres to be timelike hypersurfaces of the form $\lbrace r=r_{0}\rbrace$, where ``$r_{0}$ is the closest distance of approach for which the Einstein bending angle of a light ray is unboundedly large'' (cited from \cite{CVE}).

Claudel, Virbhadra and Ellis \cite{CVE} have geometrized this definition of a photon sphere, again in the context of static spherically symmetric spacetimes. They first define a \emph{photon surface} to be an immersed no-where spacelike hypersurface $P^{n}$ of a general Lorentzian spacetime $(\mathfrak{L}^{n+1},\mathfrak{g})$ such that every tangent vector $X\in TP^{n}$ can be extended to a null geodesic $\gamma:(-\varepsilon,\varepsilon)\to\mathfrak{L}^{n+1}$ remaining within $P^{n}$ and satisfying $\dot{\gamma}(0)=X$. A \emph{photon sphere} in a static spherically symmetric spacetime $(\mathfrak{L}^{n+1},\mathfrak{g})$ is then defined as an $\R\times SO(n)$-invariant photon surface $P^{n}\hookrightarrow\mathfrak{L}^{n+1}$.

To the best knowledge of the author, it is an open question whether more general spacetimes can possess photon spheres in this or in a generalized sense, see p.\;838 of \cite{CVE}. We will address this question in the context of AF-geometrostatic spacetimes $(\mathfrak{L}^{4},\mathfrak{g})$ as defined in Definition \ref{def:AFSVS}.

Specializing the definition of Claudel, Virbhadra and Ellis \cite{CVE}, we make the following definition of \emph{photon surfaces}, see also Perlick \cite{Perlick}.

\begin{definition}[Photon surface]\label{def:photo-surf}
A timelike embedded hypersurface $\photo\hookrightarrow\mathfrak{L}^{4}$ of an AF-geometrostatic spacetime $(\mathfrak{L}^{4},\mathfrak{g})$ is called a \emph{photon surface} if and only if any null geodesic initially tangent to $\photo$ remains tangent to $\photo$ as long as it exists.
\end{definition}

The \schild photon sphere clearly is a photon surface in the \schild spacetime. Moreover, by spherical symmetry and strict monotonicity of $\overline{N}$, a hypersurface of the form $\lbrace r=r_{0}\rbrace$ can also be written as $\lbrace \overline{N}=\overline{N}_{0}:=\overline{N}(r_{0})\rbrace$ in the \schild spacetime. The same is true in all static spherically symmetric spacetimes and thus in the situation considered in \cite{CVE} as long as $N$ is strictly monotone. We may thus consistently replace level sets of the radial variable $r$ related to spherical symmetry by level sets of the lapse function $N$ in a general AF-geometrostatic spacetime. This allows us to make the following definition of \emph{photon spheres} in AF-geometrostatic spacetimes.

\begin{definition}[Photon sphere]\label{def:photo}
Let $(\mathfrak{L}^{4},\mathfrak{g})$ be an AF-geometrostatic spacetime, $\photo\hookrightarrow\mathfrak{L}^{4}$ a photon surface. Then $\photo$ is called a \emph{photon sphere} if the lapse function $N$ of the spacetime is constant along $\photo$ or in other words if $\photo=\lbrace N=N_{0}\rbrace$.
\end{definition}

This clearly generalizes the definition of photon spheres given in \cite{CVE}, thus making the \schild photon sphere a photon sphere in our sense in particular. Moreover, our definition extends certain physical properties of the \schild photon sphere; the condition that the lapse function $N$ be constant along the photon sphere hence is not merely a technical extension of the spherically symmetric case. It has in fact a very immediate physical interpretation: The energy $E$ and the associated frequency $\nu=E/\hbar$ of a null geodesic (photon) $\gamma$ observed by the static observers ${N}^{-1}\partial_{t}$ is constant if and only if $N$ is constant along the geodesic $\gamma$, see Lemma \ref{lem:energy} below. Hence \emph{all} null geodesics tangent to a photon surface $\photo$ have constant energy in the eyes of the static observers $\frac{1}{N}\partial_{t}$ if and only if the lapse function is constant along the photon surface.

This constant energy is a main reason why the photon sphere in the \schild spacetime makes the analysis of dynamical stability difficult: The energy of photons and thus also of waves traveling with speed of light does not disperse along the photon sphere. It thus seems justified to generalize the \schild photon sphere and the notion of photon sphere defined in \cite{CVE} by defining photon spheres $\photo$ as photon surfaces satisfying $\photo=\lbrace N=N_{0}\rbrace$.

\begin{lemma}[Constant energy]\label{lem:energy}
Let $(\mathfrak{L}^{4},\mathfrak{g})$ be an AF-geometrostatic spacetime and $\gamma$ a null geodesic in $(\mathfrak{L}^{4},\mathfrak{g})$. Then the \emph{energy} and \emph{frequency} of $\gamma$ observed by the static observers ${N}^{-1}\partial_{t}$,
\begin{align}\label{energy}
E&:=\mathfrak{g}(\dot{\gamma},N^{-1}\partial_{t})\quad\text{and}\quad \nu:=E/\hbar,
\end{align}
are constant along $\gamma$ if and only if $N\circ\gamma\equiv N_{0}$ for some $N_{0}\in\R$.
\end{lemma}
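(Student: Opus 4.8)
The plan is to use that $\partial_t$ is a Killing vector field of the static metric $\mathfrak{g}=-N^2dt^2+g$, which holds because neither $N$ nor $g$ depends on the time coordinate $t$. Writing $\mathfrak{D}$ for the Levi-Civita connection of $(\mathfrak{L}^4,\mathfrak{g})$, I would first invoke the standard conservation law for Killing fields: along any geodesic $\gamma$ the quantity $\mathcal{E}:=\mathfrak{g}(\dot{\gamma},\partial_t)$ is constant. Indeed, differentiating along $\gamma$ and using metric compatibility gives $\tfrac{d}{d\lambda}\mathcal{E}=\mathfrak{g}(\mathfrak{D}_{\dot{\gamma}}\dot{\gamma},\partial_t)+\mathfrak{g}(\dot{\gamma},\mathfrak{D}_{\dot{\gamma}}\partial_t)$; the first term vanishes by the geodesic equation $\mathfrak{D}_{\dot{\gamma}}\dot{\gamma}=0$, and the second vanishes because the Killing equation forces $X\mapsto\mathfrak{g}(\mathfrak{D}_X\partial_t,X)$ to be identically zero.

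Next I would relate the observed energy $E$ to this conserved quantity. Since $N$ is a scalar function (constant in $t$) and $\mathfrak{g}$ is bilinear, one has $E=\mathfrak{g}(\dot{\gamma},N^{-1}\partial_t)=(N\circ\gamma)^{-1}\,\mathcal{E}$, where $N$ is evaluated along $\gamma$. Thus, up to the constant factor $\mathcal{E}$, the only quantity varying along $\gamma$ is $(N\circ\gamma)^{-1}$. In particular, if $N\circ\gamma\equiv N_0$, then $E\equiv\mathcal{E}/N_0$ is constant, which gives one implication immediately and without using that $\gamma$ is null.

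For the converse, the key point — and the only place where the null character of $\gamma$ enters — is to rule out the degenerate case $\mathcal{E}=0$, in which $E\equiv0$ would be constant regardless of $N$. I would show $\mathcal{E}\neq0$ for every null geodesic: $\partial_t$ is timelike, since $\mathfrak{g}(\partial_t,\partial_t)=-N^2<0$, so its $\mathfrak{g}$-orthogonal complement is a spacelike hyperplane containing no nonzero null vector. Were $\mathcal{E}=\mathfrak{g}(\dot{\gamma},\partial_t)=0$, the nonzero null vector $\dot{\gamma}$ would lie in that spacelike hyperplane, a contradiction. Hence $\mathcal{E}$ is a nonzero constant, and $E=(N\circ\gamma)^{-1}\mathcal{E}$ is constant if and only if $N\circ\gamma$ is constant, completing the equivalence.

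I expect no serious obstacle in this argument, as it is essentially a packaging of the Killing conservation law. The only point requiring genuine care is the nondegeneracy $\mathcal{E}\neq0$, which is what upgrades the trivial ``if'' direction to a true equivalence; this step relies essentially on $\gamma$ being null, since a spacelike geodesic could meet the timelike $\partial_t$ orthogonally and thereby violate the equivalence.
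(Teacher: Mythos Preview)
Your argument is correct and rests on the same underlying conservation law as the paper's proof, though the presentations differ. The paper writes out the $t$-component of the geodesic equation in the warped product decomposition, obtaining the ODE $\ddot{t}+2(N\circ\gamma)^{-1}\dot{(N\circ\gamma)}\,\dot{t}=0$, and integrates it to $\dot{t}=C\,(N\circ\gamma)^{-2}$ for a constant $C$; this is precisely the conservation of $\mathcal{E}=\mathfrak{g}(\dot\gamma,\partial_t)=-N^{2}\dot{t}$ that you derive in one stroke from the Killing property of $\partial_t$. What your version genuinely adds is the explicit handling of the degenerate case: you rule out $\mathcal{E}=0$ (equivalently $C=0$) by noting that a nonzero null vector cannot be $\mathfrak{g}$-orthogonal to the timelike $\partial_t$, whereas the paper's proof does not address this point and, read literally, its final ``if and only if'' would fail when $C=0$. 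So your Killing-field packaging is both more conceptual and, on this nondegeneracy issue, more complete.
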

\begin{proof}
Using the warped structure of the spacetime \eqref{static} to decompose the geodesic $\gamma=(t,x)$, the geodesic equation $\ddot{\gamma}=0$ implies
\begin{align}\label{geodesic}
0=\left(\ddot{\gamma}\right)^{t}&=\ddot{t}+\frac{2\;\dot{(N\circ\gamma)}\;\dot{t}}{N\circ\gamma}.
\end{align}
This can be explicitly solved to say $\dot{t}=C\,(N\circ\gamma)^{-2}$
for some constant $C\in\R$. In consequence, \eqref{energy} simplifies to $\hbar\nu=E=-C\,(N\circ\gamma)^{-1}$ which is constant along $\gamma$ if and only if $N\circ\gamma\equiv N_{0}$ for some $N_{0}\in\R$.
\end{proof}

\subsection{Notation and conventions}
Our sign convention is such that the Ricci tensor $\Ric$ is constructed from the Riemannian curvature endomorphism $\Rm$ via
\begin{align}
{\Ric}_{ij}&={\Rm_{kij}}^{k}.
\end{align}
The second fundamental form $\rom{2}$ of an isometric embedding $(A^{n},a)\hookrightarrow (B^{n+1},b)$ of semi-Riemannian manifolds with corresponding unit normal vector field $\eta$ reads
\begin{align}
\rom{2}(X,Y)&:=b(\my{b}{\nabla}_{X} \eta,Y)
\end{align}
for all $X,Y\in\Gamma(A^{n})$, irrespective of the sign $\tau:=b(\eta,\eta)$. We will make use of the contracted Gau{\ss} equation
\begin{align}\label{Gauss}
\my{b}{\Scal}-2\tau\,\my{b}{\Ric}(\eta,\eta)&= \my{a}{\Scal}-\tau(\my{a}{\tr}\,\rom{2})^2+\tau\,\lvert {\rom{2}}\rvert^2,
\end{align}
where the left upper indices indicate the metric from which a certain covariant derivative or curvature tensor is constructed. Also, we will use the Codazzi equation
\begin{align}\label{Codazzi}
b(\,\my{b}{\Rm}(X,Y,\eta),Z)&=\left(\my{a}{\nabla}\!_{X}\rom{2}\right)(Y,Z)-\left(\my{a}{\nabla}\!_{Y}\rom{2}\right)(X,Z)
\end{align}
for all $X,Y,Z\in\Gamma(A^{n})$. Moreover, if $\tau=1$, we have
\begin{align}\label{eq:f}
\mylap{b}f=\mylap{a}f+\my{b}{\nabla}^2f(\eta,\eta)+(\my{a}{\tr}\,\rom{2})\,\eta(f)
\end{align}
for every smooth $f:B^{n+1}\to\R$. On $3$-dimensional manifolds $(A^{3},a)$, we will exploit the fact that the Weyl tensor vanishes so that the Riemann endomorphism can be algebraically reconstructed from the Ricci tensor and the metric via the Kulkarni-Nomizu product
\begin{align}\label{Kulkarni}
{\my{a}{\Rm}_{ijk}}^{l}&={\my{a}{\Ric}_{i}}^{l}a_{jk}-\my{a}{\Ric}_{ik}a_{j}^{l}-{\my{a}{\Ric}_{j}}^{l}a_{ik}+\!\my{a}{\Ric}_{jk}a_{i}^{l}-\frac{\!\!\my{a}{\Scal}}{2}\left(\delta_{i}^{l}a_{jk}-a_{ik}\delta_{j}^{l}\right).
\end{align}

In the proof of Theorem \ref{thm:main}, we will use the following notation for objects defined within a given AF-geometrostatic spacetime $(\mathfrak{L}^{4},\mathfrak{g})$: The $2$-dimensional intersection of the photon sphere $\photo$ and the time slice $\slice$ is called $\surf$. The level sets of the lapse function $N$ within the time slice $\slice$ will be called $\surf_{N}$, so that $\surf=:\surf_{N_{0}}$ for some $N_{0}\in\R^{+}$ as, by definition, the photon sphere is a level set of $N$. Recall that the surfaces $\surf_{N}$ and hence also $\surf$ must be of spherical topology as $N$ is assumed to regularly foliate the spacetime and thus also the embedded submanifold $\slice$ and because of Lemma \ref{lem:foli} -- at least if the mass $m$ of the spacetime is non-zero.

Tensor fields naturally living on the spacetime $(\mathfrak{L}^{4},\mathfrak{g})$ such as the Riemann curvature endomorphism $\mathfrak{Rm}$, the Ricci curvature $\mathfrak{Ric}$, the scalar curvature $\mathfrak{R}$ etc.\ will be denoted in gothic print. The metric induced on the photon sphere $\photo$ will be denoted by $p$, the induced metric on $\surf$ by $\sigma$, see Table \ref{table} on p.\ \pageref{table}.

We will also need to handle several second fundamental forms and unit normal vector fields. The second fundamental form of $(\photo,p)\hookrightarrow(\mathfrak{L}^{4},\mathfrak{g})$ will be called $\mathfrak{h}$, the mean curvature $\mathfrak{H}$, and the corresponding outward unit normal will be called $\nu$. The second fundamental form of $(\slice,g)\hookrightarrow(\mathfrak{L}^{4},\mathfrak{g})$ vanishes as the spacetime is static and the slice is 'canonical' and thus time-symmetric. The corresponding future pointing unit normal field is $N^{-1}\partial_{t}$. Similarly, the second fundamental form of $(\surf,\sigma)\hookrightarrow(\photo,p)$ vanishes, the future pointing unit normal is again $N^{-1}\partial_{t}$. Finally, the second fundamental form of $(\surf,\sigma)\hookrightarrow(\slice,g)$ will be denoted by $h$, the mean curvature by $H$, the outward unit normal coincides with $\nu$. The same notation will be used for $(\surf_{N},\sigma)$, see Table \ref{tableh} on p.\ \pageref{tableh}. The trace-free part of a symmetric $(0,2)$-tensor $T$ will be denoted by $\free{T}$.

\section{Proof of the main theorem}\label{sec:proof}
This section is dedicated to the proof of the following 'static photon sphere uniqueness theorem':
\begin{theorem}\label{thm:main}
Let $(\mathfrak{L}^4,\mathfrak{g})$ be an AF-geometrostatic spacetime possessing a photon sphere $\photo\hookrightarrow\mathfrak{L}^4$  with mean curvature $\mathfrak{H}$, arising as the inner boundary of $\mathfrak{L}^{4}$. Assume that the lapse function $N$ regularly foliates $\mathfrak{L}^4$. Then $\mathfrak{H}\equiv\text{const}$ and $(\mathfrak{L}^4,\mathfrak{g})$ is isometric to the \schild spacetime of the same  mass $m=1/(\sqrt{3}\,\mathfrak{H})>0$.
\end{theorem}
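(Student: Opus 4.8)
\emph{The plan is to adapt Israel's divergence-identity argument \cite{Israel,Heusler}, using the lapse $N$ itself as the radial coordinate.} Since $N$ regularly foliates $\mathfrak{L}^4$, the foliation hypothesis together with Lemma~\ref{lem:foli} lets me write the spatial metric on $\slice$, from the photon sphere out to spatial infinity (so $N\nearrow1$), in the adapted Gau{\ss}ian form $g=\rho^{2}\,dN^{2}+\sigma$, where $\rho:=\lvert\nabla N\rvert^{-1}$ and $\sigma$ is the metric induced on the leaves $\surf_{N}=\lbrace N=\mathrm{const}\rbrace\cong\mathbb{S}^{2}$, with $\surf=\surf_{N_{0}}$ the photon-sphere cross-section. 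Here the outward unit normal is $\nu=\rho\,\nabla N$, the coordinate field is $\partial_{N}=\rho\,\nu$, the second fundamental form of $\surf_{N}$ in $\slice$ is $h=\tfrac12\rho^{-1}\partial_{N}\sigma$ with mean curvature $H$, and $\nu(N)=\rho^{-1}$. First I would translate the static vacuum equations into this gauge: harmonicity \eqref{SMEvac3} becomes $\partial_{N}(\rho^{-1}\sqrt{\det\sigma})=0$, i.e.\ the pointwise relation $\rho H=\partial_{N}\ln\rho$; equation \eqref{SMEvac1} supplies $\Ric(\nu,\nu)$ and the mixed components $\Ric(\nu,\cdot)$ in terms of $\partial_{N}H$, $\lvert h\rvert^{2}$ and tangential derivatives of $\rho$ (the latter through the Codazzi equation \eqref{Codazzi}); and \eqref{SMEvac2}, fed into the contracted Gau{\ss} equation \eqref{Gauss}, expresses the Gau{\ss} curvature $K_{\mathrm{Gauss}}$ of $(\surf_{N},\sigma)$ through $H$, $\lvert h\rvert^{2}$ and $\Ric(\nu,\nu)$.

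Next I would run Israel's integral argument on the leaves. Harmonicity \eqref{SMEvac3} already makes the flux $\int_{\surf_{N}}\rho^{-1}\,dA=\int_{\surf_{N}}\nu(N)\,dA$ independent of $N$ (equal to $4\pi m$ by the asymptotics), so the mass is conserved across the foliation. The substance is a second functional built from $\rho$, $H$ and $K_{\mathrm{Gauss}}$; differentiating it in $N$ and substituting the gauge forms of \eqref{SMEvac1} and \eqref{SMEvac3}, the pure divergences integrate away over the closed leaves and Gau{\ss}--Bonnet $\int_{\surf_{N}}K_{\mathrm{Gauss}}\,dA=4\pi$ (legitimate precisely because the foliation hypothesis and Lemma~\ref{lem:foli} force $\surf_{N}\cong\mathbb{S}^{2}$) enters through \eqref{Gauss} with $\Scal=0$. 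The goal is a monotonicity statement for this functional in $N$, the sign coming from a Cauchy--Schwarz estimate whose defect is controlled by $\lvert\free{h}\rvert^{2}\ge0$ together with the squared tangential gradient of $\rho$. Monotonicity is saturated at a given $N$ precisely when $\surf_{N}$ is totally umbilic in $\slice$ with $\rho$ constant along it, i.e.\ when $\surf_{N}$ is a round sphere on which $N$, $\lvert\nabla N\rvert$ and $H$ are all constant.

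The two boundary contributions must then be pinned down. At spatial infinity the asymptotically \schild expansion of Theorem~\ref{thm:KM}, $N=1-m/r+\mathcal{O}(r^{-2})$ and $g=\overline{g}+\mathcal{O}(r^{-\tau-1})$, fixes the limiting value of the functional in terms of the ADM-mass $m$. At the inner boundary I would use that a photon surface is totally umbilic, so $\mathfrak{h}=\tfrac{\mathfrak{H}}{3}p$; splitting this into spatial and temporal parts, using $\mathfrak{H}=H+\nu(N)/N$ (which follows from a direct computation in the static metric \eqref{static}, as in the proof of Lemma~\ref{lem:energy}), gives on $\surf$ the relations $H=\tfrac23\mathfrak{H}$ and $\nu(N)=\tfrac13 N_{0}\mathfrak{H}$, hence $\rho=3/(N_{0}\mathfrak{H})$. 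Restricting \eqref{SMEvac1} to $T\surf$ yields $\Ric\!\restriction_{T\surf}=(\mathfrak{H}/3)^{2}\sigma$; combining its trace with $\Scal=0$ \eqref{SMEvac2} and the Gau{\ss} equation \eqref{Gauss} gives the pointwise identity $K_{\mathrm{Gauss}}=\mathfrak{H}^{2}/3$ on $\surf$, whence Gau{\ss}--Bonnet forces $\int_{\surf}\mathfrak{H}^{2}\,dA=12\pi$ — exactly the boundary datum needed to close the monotone inequality, \emph{even though $\mathfrak{H}$ is a priori not constant along $\surf$}.

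Finally, comparing the two boundary values through the monotonicity would force saturation on every leaf, so each $\surf_{N}$ is round and totally umbilic with $\rho$, $H$ functions of $N$ alone; integrating the resulting ODEs in $N$ reconstructs the \schild metric, whence $\mathfrak{H}\equiv\mathrm{const}$, $\surf$ sits at the \schild value, and matching constants yields $m=1/(\sqrt{3}\,\mathfrak{H})$. I expect the main obstacle to lie in the inner boundary analysis rather than in the (by now standard) bulk monotonicity: unlike Israel's horizon, where $N\to0$ and the leaf degenerates, the inner boundary here sits at a finite value $N=N_{0}>0$ and its cross-section $\surf$ is \emph{not} assumed round, so the boundary term cannot be read off pointwise. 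The crux is that the umbilicity of $\photo$ must furnish \emph{exactly} the averaged identity $\int_{\surf}\mathfrak{H}^{2}\,dA=12\pi$ (through $K_{\mathrm{Gauss}}=\mathfrak{H}^{2}/3$ and Gau{\ss}--Bonnet) that the monotone quantity demands, so that the roundness of $\surf$ and the constancy of $\mathfrak{H}$ emerge \emph{from} the rigidity rather than being fed into it.
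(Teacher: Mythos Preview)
Your overall strategy---rewrite $g=\rho^{2}\,dN^{2}+\sigma$, derive Israel's differential identities on the leaves, integrate to obtain monotonicity, and evaluate at the two ends---matches the paper's. The substantive difference is your treatment of the inner boundary. You write that ``$\mathfrak{H}$ is a priori not constant along $\surf$'' and expect its constancy to emerge only from the rigidity; but in fact $\mathfrak{H}\equiv\mathrm{const}$ follows \emph{before} any Israel argument, directly from the Codazzi equation \eqref{Codazzi} applied to the totally umbilic hypersurface $\photo\hookrightarrow(\mathfrak{L}^{4},\mathfrak{g})$ together with $\mathfrak{Ric}=0$ (this is Proposition~\ref{CMC}). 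The paper uses this so that $H_{0}=\tfrac{2}{3}\mathfrak{H}$, $[\nu(N)]_{0}$ and $\rho$ are pointwise constants on $\surf$, making every inner boundary term a number rather than an integral.

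This is not merely cosmetic. The paper actually runs \emph{two} monotone functionals (arising from the identities \eqref{eq:A} and \eqref{eq:B}), obtaining the opposing inequalities \eqref{intineqA} and \eqref{intineqB} that together pin down $N_{0}$; a single monotone quantity, as your sketch suggests, would yield only a one-sided bound. With only the averaged datum $\int_{\surf}\mathfrak{H}^{2}\,d\mu=12\pi$ that you propose, the inner boundary term coming from \eqref{eq:B} does evaluate cleanly (it is proportional to $\int_{\surf}\mathfrak{H}^{2}$), but the one coming from \eqref{eq:A} produces $\int_{\surf}H\rho^{-1/2}\,d\mu\propto\int_{\surf}\mathfrak{H}^{3/2}\,d\mu$, which Gau\ss--Bonnet alone does not control. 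One could patch this with an additional H\"older step (whose equality case would then also force $\mathfrak{H}$ constant), but you do not mention it, and the far simpler route is to invoke Proposition~\ref{CMC} at the outset as the paper does.
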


We will rely on the following proposition which is well-known in the literature, cf.\ e.\,g.\ \cite{CVE} (Theorem II.1) or \cite{Perlick} (Proposition 1).
\begin{Prop}\label{prop:umbilic}
Let $(\mathfrak{L}^{4},\mathfrak{g})$ be an AF-geometrostatic spacetime and $\photo\hookrightarrow\mathfrak{L}^{4}$ an embedded timelike hypersurface. Then $\photo$ is a photon surface if and only if it is totally umbilic, i.\,e.\ iff its second fundamental form is pure trace.
\end{Prop}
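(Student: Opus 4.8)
The plan is to establish a pointwise equivalence between the photon surface condition and total umbilicity by directly analyzing null geodesics tangent to $\photo$. Since $\photo$ is a timelike embedded hypersurface, its second fundamental form $\mathfrak{h}$ with respect to the (spacelike) unit normal $\nu$ is a symmetric $(0,2)$-tensor on $\photo$. The geodesic condition interacts with $\mathfrak{h}$ through the Gauss formula: for any curve $\gamma$ lying in $\photo$, the spacetime acceleration decomposes as $\mathfrak{g}(\my{\mathfrak{g}}{\nabla}_{\dot\gamma}\dot\gamma,\nu) = -\tau\,\mathfrak{h}(\dot\gamma,\dot\gamma)$, where $\tau=\mathfrak{g}(\nu,\nu)=1$, plus the tangential part which is the intrinsic acceleration within $(\photo,p)$. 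Thus a curve in $\photo$ is a spacetime geodesic precisely when it is an intrinsic geodesic of $(\photo,p)$ \emph{and} $\mathfrak{h}(\dot\gamma,\dot\gamma)=0$.

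First I would prove the ``if'' direction. Suppose $\photo$ is totally umbilic, so $\mathfrak{h}=\lambda\,p$ for some function $\lambda$ on $\photo$. Given any null vector $X\in T_p\photo$, let $\gamma$ be the spacetime null geodesic with $\dot\gamma(0)=X$; I must show $\gamma$ stays tangent to $\photo$. Here I would instead run the argument the robust way: take the intrinsic geodesic $c$ of $(\photo,p)$ with $\dot c(0)=X$. Because $X$ is null and $\mathfrak{h}(X,X)=\lambda\,p(X,X)=0$, the Gauss formula shows $\my{\mathfrak{g}}{\nabla}_{\dot c}\dot c$ vanishes at $t=0$; to get vanishing for all $t$ one observes that $c$ remains null (its speed $p(\dot c,\dot c)$ is conserved along the intrinsic geodesic), hence $\mathfrak{h}(\dot c,\dot c)=\lambda\,p(\dot c,\dot c)=0$ identically along $c$, so the normal component of the acceleration vanishes for all $t$ and $c$ is a spacetime geodesic. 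By uniqueness of geodesics with given initial data, $\gamma=c$, so $\gamma$ remains in $\photo$. This gives the photon surface property.

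Conversely, for the ``only if'' direction, assume $\photo$ is a photon surface. Fix $p\in\photo$ and an arbitrary null $X\in T_p\photo$; the tangent-extending null geodesic $\gamma$ lies in $\photo$, so by the Gauss formula its normal acceleration vanishes, giving $\mathfrak{h}(X,X)=0$ for every null $X$. The key step is then the purely algebraic observation that a symmetric bilinear form on a Lorentzian vector space $(T_p\photo,p)$ (here signature $(-++)$) which vanishes on the entire null cone must be a multiple of $p$. This follows because the null cone of a Lorentzian metric in dimension $\geq 2$ spans enough directions to force pure-trace form; concretely, polarizing $\mathfrak{h}(X,X)=0$ over null vectors and using that any vector is a combination of null vectors pins down $\free{\mathfrak{h}}=0$. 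Since $p$ varies smoothly, $\mathfrak{h}=\lambda\,p$ on all of $\photo$, i.e.\ $\photo$ is totally umbilic.

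The main obstacle I anticipate is not the differential geometry but this final algebraic lemma in Lorentzian signature: one must verify carefully that vanishing on the null cone really forces pure trace, which is false in Riemannian signature (no null vectors) and requires the indefiniteness of $p$. I would handle it by choosing a pseudo-orthonormal frame $\{e_0,e_1,e_2\}$ with $p(e_0,e_0)=-1$, testing $\mathfrak{h}$ on the null vectors $e_0\pm e_i$ and $e_0+\tfrac{1}{\sqrt2}(e_i+e_j)$, and solving the resulting linear relations to conclude all off-diagonal components vanish and all diagonal spatial components agree with $-\mathfrak{h}(e_0,e_0)$, which is exactly the pure-trace condition.
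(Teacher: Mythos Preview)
Your argument is correct and is essentially the standard proof of this well-known fact. The paper does not supply its own proof of this proposition: it simply cites Theorem~II.1 of Claudel--Virbhadra--Ellis and Proposition~1 of Perlick and moves on. Your Gauss-formula decomposition, the observation that an intrinsic null geodesic of $(\photo,p)$ stays null and hence has vanishing normal acceleration when $\mathfrak{h}=\lambda p$, and the converse step showing $\mathfrak{h}(X,X)=0$ on the null cone forces $\free{\mathfrak{h}}=0$, are exactly the ingredients one finds in those references. The frame computation you outline for the algebraic lemma is fine; in fact it already works in Lorentzian dimension $2$, so there is no dimensional subtlety here beyond having a nondegenerate null cone. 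In short: nothing to compare, your proof is the one the paper outsources.
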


The following proposition asserts that photon spheres in AF-geometrostatic spacetimes have constant mean and constant scalar curvature. This is a special case of a more general fact about semi-Riemannian Einstein manifolds.
\begin{Prop}\label{CMC}
Let $n\geq2$ and let $(\mathfrak{L}^{n+1},\mathfrak{g})$ be a smooth semi-Riemannian manifold possessing an embedded totally umbilic hypersurface $(P^{n},p)\hookrightarrow(\mathfrak{L}^{n+1},\mathfrak{g})$, so that the second fundamental form $\mathfrak{h}$ is pure trace and thus satisfies
\begin{align}\label{umbilic}
\mathfrak{h}&=\frac{\mathfrak{H}}{n} p,
\end{align}
where $\mathfrak{H}$ denotes the mean curvature of $P^{n}$. If the semi-Riemannian manifold $(\mathfrak{L}^{n+1},\mathfrak{g})$ is Einstein or in other words if $\mathfrak{Ric}=\Lambda \mathfrak{g}$ for some constant $\Lambda\in\R$ then $P^{n}$ has constant mean curvature and constant scalar curvature
\begin{align}\label{eq:scalar}
\my{p}{\Scal}&\equiv(n+1-2\tau)\Lambda+\tau\,\frac{n-1}{n}\mathfrak{H}^{2},
\end{align}
where $\tau:=\mathfrak{g}(\eta,\eta)$. In particular, photon surfaces (and thus photon spheres) in AF-geometrostatic spacetimes are CMC and have constant scalar curvature
\begin{align}\label{eq:constant scalar 3}
\my{p}{\Scal}&\equiv\frac{2}{3}\,\mathfrak{H}^{2}.
\end{align}
\end{Prop}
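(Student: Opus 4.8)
The plan is to read off both assertions directly from the two structural equations recorded in the excerpt: the Codazzi equation \eqref{Codazzi} will give the mean-curvature statement, and the contracted Gauß equation \eqref{Gauss} will give the scalar-curvature statement. In each case the inputs are the umbilicity hypothesis \eqref{umbilic} and the Einstein condition $\mathfrak{Ric}=\Lambda\mathfrak{g}$.

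First I would establish that $\mathfrak{H}$ is constant. Substituting $\mathfrak{h}=\frac{\mathfrak{H}}{n}p$ into the right-hand side of \eqref{Codazzi} and using $\my{p}{\nabla}p=0$ gives $(\my{p}{\nabla}_X\mathfrak{h})(Y,Z)=\frac{1}{n}X(\mathfrak{H})\,p(Y,Z)$, so that \eqref{Codazzi} becomes $\mathfrak{g}(\mathfrak{Rm}(X,Y)\eta,Z)=\frac{1}{n}\bigl[X(\mathfrak{H})p(Y,Z)-Y(\mathfrak{H})p(X,Z)\bigr]$. I would then trace this identity over $X$ and $Z$ against a $p$-orthonormal frame $\{e_i\}$ of $TP^{n}$ with signs $\epsilon_i=p(e_i,e_i)$. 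The crucial observation is that the curvature contraction $\sum_i\epsilon_i\,\mathfrak{g}(\mathfrak{Rm}(e_i,Y)\eta,e_i)$ is in fact the \emph{full} spacetime Ricci term $\mathfrak{Ric}(Y,\eta)$: the one missing $\eta\eta$-contribution $\mathfrak{g}(\mathfrak{Rm}(\eta,Y)\eta,\eta)$ vanishes by the antisymmetry of $\mathfrak{Rm}$ in its last two slots. The Einstein condition forces this to equal $\Lambda\mathfrak{g}(Y,\eta)=0$, since $Y$ is tangent and $\eta$ normal, while the right-hand side collapses (after recognising the divergence and trace terms) to $\frac{1-n}{n}\,d\mathfrak{H}(Y)$. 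As $n\geq2$ the coefficient is nonzero, whence $d\mathfrak{H}\equiv0$, i.e.\ $\mathfrak{H}$ is constant.

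For the scalar curvature I would solve the contracted Gauß equation \eqref{Gauss} (with $a=p$, $b=\mathfrak{g}$, $\rom{2}=\mathfrak{h}$) for $\my{p}{\Scal}$, obtaining $\my{p}{\Scal}=\mathfrak{R}-2\tau\,\mathfrak{Ric}(\eta,\eta)+\tau(\my{p}{\tr}\,\mathfrak{h})^2-\tau\,|\mathfrak{h}|^2$. The Einstein hypothesis gives $\mathfrak{R}=(n+1)\Lambda$ and $\mathfrak{Ric}(\eta,\eta)=\tau\Lambda$, while \eqref{umbilic} yields $\my{p}{\tr}\,\mathfrak{h}=\mathfrak{H}$ and $|\mathfrak{h}|^2=\mathfrak{H}^2/n$, so the two curvature terms coming from $\mathfrak{h}$ combine to $+\tau\frac{n-1}{n}\mathfrak{H}^2$. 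Collecting the remaining terms and recalling $\tau^2=1$ produces exactly \eqref{eq:scalar}; constancy of $\my{p}{\Scal}$ is then immediate, since $\Lambda$, $\tau$ and—by the previous step—$\mathfrak{H}$ are all constant. The specialization \eqref{eq:constant scalar 3} finally follows by inserting $n=3$, $\tau=1$ (the photon sphere is timelike with spacelike unit normal) and $\Lambda=0$ (vacuum, $\mathfrak{Ric}=0$).

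The main obstacle I anticipate is bookkeeping rather than conceptual: carrying the indefinite-signature trace factors $\epsilon_i$ correctly through the contraction of \eqref{Codazzi}, and verifying that the partial frame sum over $TP^{n}$ genuinely reconstructs $\mathfrak{Ric}(Y,\eta)$ under the sign convention ${\Ric}_{ij}={\Rm_{kij}}^{k}$ fixed in the excerpt. Once that identification is secured, both the constancy of $\mathfrak{H}$ and the scalar-curvature formula are short consequences of the Einstein equation.
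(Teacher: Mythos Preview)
Your proposal is correct and follows essentially the same route as the paper: contract the Codazzi equation under the umbilicity hypothesis, identify the resulting curvature sum with $\mathfrak{Ric}(Y,\eta)$ via the vanishing of the $\eta\eta$-term, use the Einstein condition to conclude $\mathfrak{H}$ is constant, and then read off $\my{p}{\Scal}$ from the contracted Gau{\ss} equation. The only small omission is that, for the final specialization to photon surfaces, you should explicitly invoke Proposition~\ref{prop:umbilic} to justify that photon surfaces are totally umbilic and hence fall under the hypotheses of the proposition.
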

\begin{proof}
Using \eqref{umbilic} and denoting the unit normal to $P^{n}$ (corresponding to $\mathfrak{h}$) by $\eta$, the Codazzi equation \eqref{Codazzi} reduces to
\begin{align}
\mathfrak{g}(\mathfrak{Rm}(X,Y,\eta),Z)&=\left(\my{p}{\nabla}\!_{X}\mathfrak{h}\right)(Y,Z)-\left(\my{p}{\nabla}\!_{Y}\mathfrak{h}\right)(X,Z)\\\nonumber
&=X(\mathfrak{H}/n)\,p(Y,Z)-Y(\mathfrak{H}/n)\,p(X,Z)
\end{align}
for all $X,Y,Z\in \Gamma(TP^{n})$. Contracting the $X$ and $Z$ slots and exploiting the antisymmetry of $\mathfrak{Rm}$, namely that $\mathfrak{g}(\mathfrak{Rm}(\eta,Y,\eta),\eta)=0$, we obtain
\begin{align}\label{1-n}
\mathfrak{Ric}(Y,\eta)&=Y(\mathfrak{H}/n)-nY(\mathfrak{H}/n)=(1-n)Y(\mathfrak{H}/n)
\end{align}
for all $Y\in\Gamma(TP^{n})$. The left hand side of \eqref{1-n} vanishes as $\mathfrak{g}$ is Einstein and $\mathfrak{g}(Y,\eta)=0$ which proves that $P^{n}$ is CMC as $Y\in\Gamma(TP^{n})$ was arbitrary. Furthermore, by the Gau{\ss} equation \eqref{Gauss} and \eqref{umbilic}, we find that
\begin{align}\label{eq:constant scalar}
\mathfrak{R}-2\tau\,\mathfrak{Ric}(\eta,\eta)&= \my{p}{\Scal}-\tau\,\mathfrak{H}^2+\tau\,\lvert {\mathfrak{h}}\rvert^2\\\nonumber
&=\my{p}{\Scal}-\tau\,\mathfrak{H}^{2}+\tau\,(\mathfrak{H}^{2}/n)\\\nonumber
&=\my{p}{\Scal}-\tau\,\frac{n-1}{n}\mathfrak{H}^{2}.
\end{align}
As $\mathfrak{g}$ is Einstein, we have $\mathfrak{Ric}(\eta,\eta)=\tau\,\Lambda$ and $\mathfrak{R}=(n+1)\Lambda$. Equation \eqref{eq:constant scalar} thus simplifies to \eqref{eq:scalar} so that $(P^{n},p)$ has constant scalar curvature as claimed.

Finally, AF-geometrostatic spacetimes are clearly Einstein with $\Lambda=0$ by the Einstein vacuum equation \eqref{EE}. Thus, by Proposition \ref{prop:umbilic}, photon surfaces (and hence photon spheres) in AF-geometrostatic spacetimes are CMC and have constant scalar curvature as in \eqref{eq:constant scalar 3}.
\end{proof}

Let us now proceed to prove Theorem \ref{thm:main}.
\begin{proof}[Proof of Theorem \ref{thm:main}]
Let $(\mathfrak{L}^4,\mathfrak{g})$ be an AF-geometrostatic spacetime as in the statement of the theorem, and let $\photo=\R\times\surf=:\R\times\surf_{N_{0}}$ be the photon sphere arising as the inner boundary of $\mathfrak{L}^{4}$. Let us first of all compute the second fundamental form $h$ of $(\surf,\sigma)\to(\slice,g)$. For $X,Y\in\Gamma(T\surf)$, we find
\begin{align*}
h(X,Y)&=g(\my{g}{\nabla}_{X}\nu,Y)=\mathfrak{g}(\my{\mathfrak{g}}{\nabla}_{X}\nu,Y)=\mathfrak{h}(X,Y)=\mathfrak{H}\,p(X,Y)/3=\mathfrak{H}\,\sigma(X,Y)/3,
\end{align*}
where we have used that $(\slice,g)$ is time-symmetric and that $(\photo,p)$ is totally umbilic in $(\mathfrak{L}^4,\mathfrak{g})$ by Proposition \ref{prop:umbilic}. As $\mathfrak{H}$ is constant by Proposition \ref{CMC}, this implies
\begin{align}\label{eq:umbilic CMC}
h&=\frac{\mathfrak{H}}{3}\sigma\quad\text{and thus}\quad H\equiv\frac{2}{3}\,\mathfrak{H},
\end{align}
so that the embedding $(\surf,\sigma)\hookrightarrow(M^{3},g)$ is totally umbilic and CMC. We will from now on write $H_{0}:=H=2\mathfrak{H}/3$. Using this information in the Codazzi-equation \eqref{Codazzi}, we get $g(\my{g}{\Rm}(X,Y,\nu),Z)=0$ for all $X,Y,Z\in\Gamma(T\surf)$ and thus 
\begin{align}\label{eq:Ric1}
\my{g}{\Ric}(X,\nu)&=0
\end{align}
for all $X\in\Gamma(T\surf)$ by contracting the $X$ and $Z$ slots and using the symmetry of the Riemann tensor. From \eqref{eq:Ric1} and the static metric equation \eqref{SMEvac1}, we deduce
\begin{align}\label{eq:nuN}
X(\nu(N))&=X(\nu(N))-\left(\my{g}{\nabla}_{X}\nu\right)(N)=\my{g}{\nabla^{2}}N(X,\nu)=N\,\my{g}{\Ric}(X,\nu)=0
\end{align}
for all $X\in\Gamma(T\surf)$ as $N$ is constant along $\surf$ by definition of photon spheres. This shows that $\nu(N)\equiv:\left[\nu(N)\right]_{0}$ is constant along $\surf$. From \eqref{SMEvac3} and \eqref{eq:nuN}, it can be seen that the mass parameter $m$ from Theorem \ref{thm:KM} (or in other words the ADM-mass of $(\slice,g)$) satisfies
\begin{align}\label{eq:mass}
m&=\frac{1}{4\pi}\int_{\surf}\nu(N)\,d\mu=\frac{\vert\surf\vert_{\sigma}}{4\pi}\left[\nu(N)\right]_{0},
\end{align}
where $\mu$ denotes the area measure with respect to $\sigma$, see also Section 4.2 in \cite{CDiss}.
 \vspace{1em}
\subsubsection*{Why the mass $m$ and $\nu(N)$ are non-zero}\label{comment}
Observe that $\vert\nu(N)\vert=\vert dN\vert_{g}$ on every level set $\surf_{N}$ of $N$ in $\slice$. Thus $\nu(N)\neq0$ on $\surf_{N}$ follows from the assumption that $N$ regularly foliates $\slice$. This\footnote{Alternatively, $m=0$ implies that $g$ is flat by \eqref{SMEvac1}, \eqref{eq:mass}, and \eqref{Kulkarni}, so that the spacetime is some exterior region of the Minkowski spacetime. The photon surfaces of the Minkowski spacetime are well-understood, see e.\,g.\ \cite{ONeill1}; in particular, the Minkowski spacetime does not possess a photon sphere in our sense.} ensures $m\neq0$ by \eqref{eq:mass}. 

By the maximum principle for elliptic PDEs (see e.\,g.\ \cite{GT}), by \eqref{SMEvac2}, and by the asymptotic condition that $N\to1$ as $r\to\infty$ required in the definition of AF-geometrostatic systems, Definition \ref{def:AFSVS}, $N$ will have values in the interval $I:=\left[N_0,1\right)$ or in the interval $I:=\left(1,N_{0}\right]$, where $N_{0}<1$ corresponds to positive and $N_{0}>1$ corresponds to negative mass $m$, see Lemma \ref{lem:foli}.

It will be convenient to use the \emph{area radius} of $\surf_{N}$ and $\surf=\surf_{N_{0}}$, defined by
\begin{align}\label{rN}
r(N)&:=\sqrt{\vert\surf_{N}\vert_{\sigma}/4\pi}\quad\text{and}\quad r_{0}:=r(N_{0}).
\end{align}

Applying \eqref{eq:f} to $f=N$ on $(\slice,g)$ and using \eqref{eq:nuN}, \eqref{SMEvac1}, \eqref{SMEvac3}, and \eqref{eq:umbilic CMC}, we find that\begin{align}\label{eq:Ric2}
N_{0}\,\my{g}{\Ric}(\nu,\nu)&\equiv-H_{0}\left[\nu(N)\right]_{0}.
\end{align}
When plugging this into the Gau{\ss} equation \eqref{Gauss} and remembering \eqref{SMEvac2} and \eqref{eq:umbilic CMC}, one gets
\begin{align}\label{eq:scal}
N_{0}\,\my{\sigma}{\Scal}&\equiv-2N_{0}\,\my{g}{\Ric}(\nu,\nu)+N_{0}H_{0}^2/2\equiv2 H_{0}\left[\nu(N)\right]_{0}+N_{0}H_{0}^2/2.
\end{align}
The Gau{\ss}-Bonnet theorem allows us to integrate \eqref{eq:scal} so that
\begin{align}\label{Gauss constraint}
4 N_{0}&=4mH_{0}+ r_{0}^{2}N_{0}H_{0}^{2}
\end{align}
by \eqref{eq:mass} and \eqref{rN}. The Gau{\ss} equation \eqref{Gauss} for $(\surf,\sigma)\hookrightarrow(\photo,p)$ implies that
\begin{align}\label{hihi}
\my{p}{\Scal}+2\,\my{p}{\Ric}(\frac{1}{N}\partial_{t},\frac{1}{N}\partial_{t})&= \my{\sigma}{\Scal},
\end{align}
on $\surf_{N_{0}}$. 
We know from Proposition \ref{CMC} that $\my{p}{\Scal}=2\mathfrak{H}^{2}/3$. The structure of the metric $p$ implies
\begin{align}
\my{p}{\Ric}(\frac{1}{N}\partial_{t},\frac{1}{N}\partial_{t})&=\frac{\mylap{\sigma}{N}}{N}=0
\end{align}
on $\surf=\surf_{N_{0}}$. Thus, \eqref{hihi} allows to compute
\begin{align}
\my{\sigma}{\Scal}&=\my{p}{\Scal}=2\mathfrak{H}^{2}/3.
\end{align}
The Gau{\ss}-Bonnet theorem leads to the explicit expression
\begin{align}\label{explicitr}
\mathfrak{H}\,r_{0}&=\pm\sqrt{3}
\end{align}
so that in particular $\mathfrak{H}\neq0$. Finally, from \eqref{Gauss constraint}, \eqref{explicitr}, and \eqref{eq:umbilic CMC}, we find
\begin{align}\label{explicitN}
0<N_{0}&=m\,\mathfrak{H}.
\end{align}
\subsubsection*{Handling the sign of $m$ and $\nu(N)$.} Other than it is done in Israel's analysis, we explicitly include the case of negative\footnote{Observe that $m=0$ has been ruled out above.} mass $m$ or in other words a priori allow $\nu(N)<0$ and $H<0$ along the photon sphere (by \eqref{eq:mass}, \eqref{explicitN}, and \eqref{eq:umbilic CMC}). In fact, this possibility can be ruled out by known results on the existence of outer trapped surfaces in static spacetimes, see Galloway \cite{Greg}. This implies that no smooth closed surface of constant negative mean curvature can be embedded into an AF-geometrostatic system as its inner boundary. However, we will not appeal to those arguments for the sake of demonstrating that our Israel style approach is flexible enough to directly handle negative mass/constant mean curvature of the photon sphere.

\subsubsection*{Rewriting the metric $g$ in adapted coordinates}
The next step imitates Israel's argument for static black hole uniqueness \cite{Israel} (as exposed in Heusler \cite{Heusler}). Because $\vert\nu(N)\vert=\vert dN\vert_{g}\neq0$ on $\slice$, the function $\rho:\slice\to\R^{+}$ given by
\begin{align}\label{rho}
\rho(x)&:=({\vert}\left.\nu(N)\right|_x{\vert})^{-1}\quad\text{for all }x\in\slice
\end{align}
is well-defined. As $N$ regularly foliates $\slice$, we can extend any coordinate system $(y^I),\,I=1,2$ on $U\subset\surf_{N_{0}}$ to the cylinder $I\times U$ by letting it flow along the (nowhere vanishing) gradient of $N$.  By construction, the metric $g$ reads 
\begin{align}\label{rhometric}
g=\rho^2\,dN^2+\sigma,
\end{align}
where $\sigma$ is the $2$-metric induced on $\surf_{N}$ (and depends on $N$!). { As $\nu(N)\neq0$, we can define a global sign
\begin{align}\label{def:lambda}
\lambda&:\equiv\signum(\nu(N))=\signum(m)=\signum(\mathfrak{H})=\signum(H_0)
\end{align}
by \eqref{eq:mass}, \eqref{explicitN}, and \eqref{eq:umbilic CMC}.}

In these variables, the static metric equations \eqref{SMEvac1}, \eqref{SMEvac3} imply the following identities
\begin{align}\label{eq:A}
0&=\frac{{\lambda}}{\rho}\left(\frac{H}{N}-H\!,_N-\frac{{\lambda}\rho}{2}\,H^2\right)-\frac{2}{\sqrt{\rho}}\,\mylap{2}\sqrt{\rho}-\frac{1}{2}\left[ \frac{\lvert \my{\sigma}{\grad}\rho\rvert_{\sigma}^2}{\rho^2}+2\lvert \free{h}\rvert^2_{\sigma}\right]\\\label{eq:B}
0&=\frac{{\lambda}}{\rho}\left(3\frac{H}{N}-H\!,_N\right)-\my{\sigma}{\Scal}-\mylap{2}\ln{\rho}-\left[ \frac{\lvert \my{\sigma}{\grad}\rho\rvert^2_{\sigma}}{\rho^2}+2\lvert \free{h}\rvert^2_{\sigma}\right]\\\label{eq:C}
0&=\rho,_N-{\lambda\,}\rho^2 H
\end{align}
 on any level set of $N$. Let $\mathfrak{s}:=\det(\sigma_{IJ})$. By definition of the second fundamental form $h$, we have $(\sqrt{\mathfrak{s}}),_N={\lambda}\sqrt{\mathfrak{s}}\,H\rho$. Using \eqref{eq:C} and non-negativity of the terms in square brackets, we obtain the following inequalities from \eqref{eq:A} and \eqref{eq:B}:
\begin{align}\label{ineq:A}
\partial_N\left(\frac{{\lambda}\sqrt{\mathfrak{s}}\,H}{\sqrt{\rho}\,N}\right)&\leq-2\frac{\sqrt{\mathfrak{s}}}{N}\,\,\mylap{2}\sqrt{\rho},\\\label{ineq:B}
\partial_N\left(\frac{\sqrt{\mathfrak{s}}}{\rho}\,\left[H N+\frac{4{\lambda}}{\rho}\right]\right)&\leq-N\sqrt{\mathfrak{s}}\left(\mylap{2}\ln{\rho}+\my{\sigma}{\Scal}\right),
\end{align}
holding on $\surf_{N}$. In these inequalities, equality holds if and only if the square brackets in \eqref{eq:A} and \eqref{eq:B} vanish i.\,e.\ iff $\nu(N)\equiv\text{const}$ and $\free{h}=0$ on the given level set. Integrating \eqref{ineq:A} from $N_{0}$ to $1$ and subsequently over $\surf$ (using a partition of unity corresponding to coordinate patches $U\subset\surf$), we get
\begin{align}\label{intA}
{\lambda}\left[\frac{1}{N}\int_{\surf_N}\frac{H}{\sqrt{\rho}}\,\,d\mu_N\right]_{N_0}^{1}&\leq-2 \int_{N_0}^{1}\frac{1}{N}\int_{\surf_N}\mylap{2}\sqrt{\rho}\,\,d\mu_N\,dN=0
\end{align}
 from Fubini's theorem, where $\mu_{N}$ denotes the area measure w.\,r.\,t.\ $\sigma$ on $\surf_{N}$. The right-hand side of \eqref{intA} vanishes by the divergence theorem. Now $H,\rho\equiv\text{const}$ on $\surf_{N_{0}}$ by \eqref{eq:umbilic CMC} and \eqref{eq:nuN}. Moreover, Theorem \ref{thm:KM} and Lemma \ref{lem:foli} allow us to compute that $H=\frac{2}{r}+\mathcal{O}(r^{-2})$ and ${\rho(N)}=\frac{r^{2}}{{\vert}m{\vert}}+\mathcal{O}(r)$ asymptotically as $r\to\infty$. Combining this with \eqref{intA} and the definition of the area radius \eqref{rN}, we find
\begin{align}
\frac{4\pi r_{0}^{2} H_{0}\,\sqrt{{\lambda}\left[\nu(N)\right]_{0}}}{N_0}{\lambda}&\geq\lim_{r\to\infty}\frac{{\lambda}}{N}\int_{\surf_N}H\sqrt{{\lambda\,}\nu(N)}\,d\mu_N= 8\pi\sqrt{{\vert}m{\vert}}{\lambda},
\end{align}
where $H_{0}$ denotes the mean curvature of $\surf_{N_{0}}$. Using \eqref{eq:mass}, this simplifies to
{\begin{align}\label{intineqA}
\lambda\left(2N_{0}-r_{0}H_{0}\right)&\leq0.
\end{align}}

Furthermore, by Fubini's theorem, the divergence theorem, and the Gau{\ss}-Bonnet theorem, integrating inequality \eqref{ineq:B} from $N_{0}$ to $1$ and subsequently over $\surf$ (with a partition of unity as before) gives
\begin{align}
\left[\;\int_{\surf_N}\frac{1}{\rho}\left[HN+\frac{4{\lambda}}{\rho}\right]d\mu_{N}\right]_{N_0}^1&\leq-\int_{N_0}^{1}N\int_{\surf_N}\left(\mylap{2}\ln{\rho}+\my{\sigma}{\Scal}\right)\,d\mu_N\,dN\\
&=-8\pi\,\int_{N_0}^{1}N\,dN=-4\pi(1-N_0^2).
\end{align}
Again making use of the discussed asymptotics, \eqref{eq:nuN}, and \eqref{eq:umbilic CMC}, we obtain
\begin{align}
{\lambda}\left[\nu(N)\right]_{0}\left[H_{0}N_0+4\left[\nu(N)\right]_{0}\right]\lvert\surf_{N_{0}}\rvert&\geq 4\pi(1-N_0^2).
\end{align}
By \eqref{eq:mass} and \eqref{rN}, this simplifies to
\begin{align}\label{intineqB}
{\vert}m{\vert}\left[H_{0}N_{0}+\frac{4m}{r_{0}^{2}}\right]&\geq 1-N_0^2.
\end{align}
{ Using \eqref{eq:umbilic CMC}, \eqref{explicitr}, and \eqref{explicitN}, we find
\begin{align*}
 r_0^2\leq (6\lambda+3)m^2
\end{align*}
which rules out $\lambda=-1$. Thus, $m>0$ and $\mathfrak{H}>0$ by \eqref{def:lambda}.}
We now estimate on the one hand that
\begin{align}\label{eq:1}
1&\stackrel{\eqref{Gauss constraint}}{=}\frac{1}{4N_{0}}\left(4mH_{0}+ r_{0}^{2}N_{0}H_{0}^{2}\right)\stackrel{\eqref{intineqA}}{\geq}\frac{2m}{r_{0}}+N_{0}^{2}\quad\Leftrightarrow\quad N_{0}^{2}\leq 1-\frac{2m}{r_{0}}
\end{align}
and on the other hand that
\begin{align}\label{eq:2}
2&\stackrel{\eqref{Gauss constraint}}{=}\frac{1}{2N_{0}}\left[4mH_{0}+ r_{0}^{2}N_{0}H_{0}^{2}\right]=\frac{H_{0}r_{0}^{2}}{2N_{0}}\left[H_{0}N_{0}+\frac{4m}{r_{0}^{2}}\right]\\\nonumber
&\stackrel{\eqref{intineqB}}{\geq}\frac{H_{0}r_{0}^{2}(1-N_{0}^{2})}{2mN_{0}}\stackrel{\eqref{intineqA}}{\geq}\frac{(1-N_{0}^{2})r_0}{m}\\\nonumber
&\!\!\!\!\Leftrightarrow\quad N_{0}^{2}\geq 1-\frac{2m}{r_{0}}.
\end{align}
Combining \eqref{eq:1} and \eqref{eq:2} gives $N_{0}=\sqrt{1-2m/r_{0}}$ just as in Schwarzschild. Both inequalities \eqref{eq:1} and \eqref{eq:2} are sharp so that $H_{0}=2N_{0}/r_{0}$. This, together with \eqref{explicitr}, \eqref{explicitN}, and \eqref{eq:umbilic CMC} gives $m=1/(\sqrt{3}\,\mathfrak{H})>0$ so that the parameter $\mathfrak{H}>0$ determines the (positive) mass of the spacetime.

As discussed above, this also implies equality in both \eqref{ineq:A} and \eqref{ineq:B} which gives us $\free{h}\equiv0$ and $\rho\equiv\text{const}$ on every $\surf_N$. By \eqref{eq:C}, we find that $H$ must also be constant on every $\surf_{N}$. \eqref{eq:B} and \eqref{eq:A} then imply that the Gau{\ss} curvature must be constant on every level $\surf_N$. This, in turn, tells us that $\sigma=r^{2}\Omega$ on $\surf_{N}$ with $r=r(N)$ by the uniformization theorem, with $\Omega$ the canonical metric on $\mathbb{S}^2$ as above. From \eqref{eq:mass}, we know that $\rho=r^2/m$. Using this and the area radius $r(N)$ defined in \eqref{rN}, \eqref{eq:C} implies $\frac{dr}{dN}>0$ if $H>0$ on $\surf_{N}$. However, \eqref{eq:A} can be integrated explicitly to say 
\begin{align}
H&=A\,N\,\exp\left(-\int_{N_{0}}^{N}(\rho H)\,dN\right)
\end{align}
on $\surf_{N}$ for some $A\in\R$. As $H_{0}>0$, also $A>0$ and thus $H>0$ on all $\surf_{N}$. Thus $r(N)$ is invertible, we denote its inverse function by $N(r)$.

At this point, we know by chain rule that 
\begin{align}
g&=\rho(N(r))^2\,(N'(r))^{2}\,dr^2+r^2\Omega,
\end{align}
where $N'$ is the $r$-derivative of $N$. This shows that the spacetime is spherically symmetric so that the claim follows from a direct computation or from Birkhoff's theorem, see e.\,g.\ \cite{Wald}. For the sake of completeness, we will demonstrate the direct computation, here:

Equation \eqref{eq:C} tells us that
$H=2m/(r^{3}N')$ by chain rule, where $N'$ must be non-zero as $N$ is invertible. This and \eqref{eq:A} combine to an ODE for $N$, namely
\begin{align}
N\,N''&=-\frac{2N}{r}N'-(N')^{2}\quad\Leftrightarrow\quad u''=-2u'/r,
\end{align}
using $u:=N^{2}$. This ODE can be solved explicitly to read $N(r)=\sqrt{A+B/r}$ with $A,B\in\R$. From the asymptotic convergence $N\to1$ as $r\to\infty$, we deduce $A=1$. The explicit value $N_{0}=\sqrt{1-2m/r_{0}}$ allows us to compute $B=-2m$ so that 
\begin{align}
N(r)&=\sqrt{1-2m/r}\quad\text{and thus}\\
g&=\rho(N(r))^2\,(N'(r))^{2}\,dr^2+r^2\Omega=\frac{1}{N^{2}}\,dr^{2}+r^{2}\Omega.
\end{align}
This proves that the spacetime $(\mathfrak{L}^{4})$ is isometric to (an exterior region of) the \schild black hole spacetime $(\overline{\mathfrak{L}}^{4},\overline{\mathfrak{g}})$ from \eqref{schwarzmetric} of mass $m=\frac{1}{\sqrt{3}\,\mathfrak{H}}>0$.
\end{proof}
\vfill\vfill\vfill\vfill
\begin{figure}[h]
\begin{tabular}{c|c|c|c|c}
&&&&\\[-.5em]
name & manifold & metric & tensors/operators & indices\\[.5em]\hline\hline
&&&&\\[-.25em]
spacetime & $\mathfrak{L}^{4}=\R\times\slice$ & $\mathfrak{g}$ & $\mathfrak{Ric}$, $\my{\mathfrak{g}}{\nabla}$, $\dots$ & $\alpha,\beta,\dots$\\[.5em]\hline
&&&&\\[-.25em]
photon sphere & $\photo=\R\times\surf$ & $p$ & $\my{p}{\Ric}$, $\my{p}{\nabla}$, $\dots$ & $a,b,\dots$\\[.5em]\hline
&&&&\\[-.25em]
time slice & $\slice$ & $g$ & $\my{g}{\Ric}$, $\my{g}{\nabla}$, $\dots$ & $i,j,\dots$\\[.5em]\hline
&&&&\\[-.25em]
photon sphere (base) & $\surf=\surf_{N_{0}}$ & $\sigma$ & $\my{\sigma}{\Ric}$, $\my{\sigma}{\nabla}$, $\dots$ & $I,J,\dots$\\[.5em]\hline
&&&&\\[-.25em]
$N$-level in $\slice$ & $\surf_{N}$ & $\sigma$ & $\my{\sigma}{\Ric}$, $\my{\sigma}\nabla$, $\dots$ & $I,J,\dots$\\[.5em]
\end{tabular}
\caption{Notational conventions for manifolds, metrics, induced tensor fields, induced differential operators, and coordinate indices.\label{table}}
\end{figure}
\vfill
\phantom{hallo}
\begin{figure}[h]
\begin{tabular}{c|c|c|c}
&&&\\[-.5em]
embedding & second fund.\ form & mean curvature & normal vector\\[.5em]\hline\hline
&&&\\[-.25em]
$(\photo,p)\hookrightarrow(\mathfrak{L}^{4},\mathfrak{g})$ & $\mathfrak{h}$ & $\mathfrak{H}$ & $\nu$\\[.5em]\hline
&&&\\[-.25em]
$(\slice,g)\hookrightarrow(\mathfrak{L}^{4},\mathfrak{g})$ & $0$ & $0$ & $N^{-1}\partial_{t}$\\[.5em]\hline
&&&\\[-.25em]
$(\surf,\sigma)\hookrightarrow(\photo,p)$ & $0$ & $0$ & $N^{-1}\partial_{t}$\\[.5em]\hline
&&&\\[-.25em]
$(\surf,\sigma)\hookrightarrow(\slice,g)$ & $h$ & $H$ & $\nu$\\[.5em]\hline
&&&\\[-.25em]
$(\surf_{N},\sigma)\hookrightarrow(\slice,g)$ & $h$ & $H$ & $\nu$
\end{tabular}
\caption{Notational conventions for second fundamental form, mean curvature, and normal vectors.\label{tableh}}
\end{figure}
\vfill\vfill
\bibliographystyle{amsplain}
\bibliography{photon-sphere}
\nocite{Frankel}
\end{document}